%% file: main.tex
\documentclass[3p,times,procedia]{elsarticle}
\usepackage{ecrc}
\usepackage[bookmarks=false]{hyperref}
    \hypersetup{colorlinks,
      linkcolor=blue,
      citecolor=blue,
      urlcolor=blue}
\volume{00}

\firstpage{1}

\journalname{Procedia Computer Science}

\runauth{Tudor Pistol}

\jid{procs}

\usepackage{amssymb}
\usepackage{amsthm}
\usepackage{amsmath}
\usepackage{subcaption}
\usepackage{algorithm}
\usepackage{algpseudocode}
\usepackage{booktabs} 
\usepackage{multirow}  
\usepackage{graphicx}
\usepackage[figuresright]{rotating}
\newtheorem{lemma}{Lemma}

\begin{document}
\begin{frontmatter}

%% Title, authors and addresses

%% use the tnoteref command within \title for footnotes;
%% use the tnotetext command for the associated footnote;
%% use the fnref command within \author or \address for footnotes;
%% use the fntext command for the associated footnote;
%% use the corref command within \author for corresponding author footnotes;
%% use the cortext command for the associated footnote;
%% use the ead command for the email address,
%% and the form \ead[url] for the home page:
%%
%% \title{Title\tnoteref{label1}}
%% \tnotetext[label1]{}
%% \author{Name\corref{cor1}\fnref{label2}}
%% \ead{email address}
%% \ead[url]{home page}
%% \fntext[label2]{}
%% \cortext[cor1]{}
%% \address{Address\fnref{label3}}
%% \fntext[label3]{}

\dochead{30th International Conference on Knowledge-Based and Intelligent Information \& Engineering Systems (KES 2026)}%
%% Use \dochead if there is an article header, e.g. \dochead{Short communication}
%% \dochead can also be used to include a conference title, if directed by the editors
%% e.g. \dochead{17th International Conference on Dynamical Processes in Excited States of Solids}

\title{Learning Doubly Sparse Explicitly Conditioned Transforms}
\author[]{Tudor Pistol\corref{cor1}} 

\address{Department of Mathematics, Faculty of Mathematics and Computer Science, University of Bucharest, Academiei 14, Bucharest, 010014, Romania}

%% use optional labels to link authors explicitly to addresses:
%% \author[label1,label2]{<author name>}
%% \address[label1]{<address>}
%% \address[label2]{<address>}

\input{sections/abstract}

\begin{keyword}
Transform Learning \sep Sparse Encoding \sep Proximal Gradient Methods \sep Singular Spectrum Projection \sep FISTA 

%% keywords here, in the form: keyword \sep keyword

%% PACS codes here, in the form: \PACS code \sep code

%% MSC codes here, in the form: \MSC code \sep code
%% or \MSC[2008] code \sep code (2000 is the default)

\end{keyword}
\cortext[cor1]{Corresponding author. Tel.: +40-756-927-417}
\end{frontmatter}

%\correspondingauthor[*]{Corresponding author. Tel.: +0-000-000-0000 ; fax: +0-000-000-0000.}
\email{tudor.pistol@s.unibuc.ro}

\input{sections/0-introduction}
\input{sections/1-problem_formulation}
\input{sections/2-theoretical_analysis}
\input{sections/3-algorithm}
\input{sections/4-numerical_experiments}
\input{sections/5-conclusions}

\section*{Acknowledgements}

I would like to express my appreciation to Professor Paul Irofti for his invaluable guidance, as well as Professor Cristian Rusu for our stimulating conversations and his testing suggestions. I want to thank Professor Ionel Popescu for support and trust. 

I would like to express my gratitude to my family, and particularly to my mother, Liliana, for her unwavering support and encouragement. I also thank my friend, Teofil, for our insightful conversations and understanding. I am grateful to Liana for her unconditional patience and inspiration.

% Acknowledgements and Reference heading should be left justified, bold, with the first letter capitalized but have no numbers. Text below continues as normal.

%% The Appendices part is started with the command \appendix;
%% appendix sections are then done as normal sections
%% \appendix

%% \section{}
%% \label{}

% \appendix
% \section{An example appendix}
% Authors including an appendix section should do so before References section. Multiple appendices should all have headings in the style used above. They will automatically be ordered A, B, C etc.

% \subsection{Example of a sub-heading within an appendix}
% There is also the option to include a subheading within the Appendix if you wish.
% %% References
% %%
% %% Following citation commands can be used in the body text:
% %% Usage of \cite is as follows:
% %%   \cite{key}         ==>>  [#]
% %%   \cite[chap. 2]{key} ==>> [#, chap. 2]
% %%

% %The citation must be used in following style: \cite{article-minimal} \cite{article-full} \cite{article-crossref} \cite{whole-journal}.
% %% References with BibTeX database:

% %\bibliography{xampl}
% %\bibliographystyle{elsarticle-harv}
\bibliographystyle{elsarticle-harv}
\bibliography{references}

\end{document}

%% file: sections/abstract.tex
% !TEX root = ./main.tex

\begin{abstract}
Finding convenient spaces in which certain hypotheses regarding an assumed sparse structure of natural signals hold true has become a desirable result in recent research, its implications being reflected in areas such as data compression, noise reduction and feature extraction. While the extensively used analytical transforms, such as DFT or DCT, already provide efficient algorithms and robust sparse representations, they assume a fixed prior about the data, failing to accurately capture the specific structure of more restrictive classes of signals. To address this, the concept of a data-adaptive, learnt transform has been introduced in the literature, allowing for the reduction of a residual term in the transform domain. More recent studies have shown that the condition number serves as a good metric in this context, where the desired outcome alternates between a generalizing tendency and one that achieves minimal approximation error. Motivated by these considerations, we introduce the learning of a structured, explicitly conditioned transform formulated as the product of a fixed canonical matrix and a refining data-adaptive sparse component. This approach seeks to preserve the advantages of fast and stable analytical transforms, while introducing controllable adaptivity to the data. No references that concern this specific formulation have been identified so far, indicating its novelty. The proposed algorithm is motivated within the framework of inexact proximal methods, leveraging a newly derived closed-form projection operator. Empirical observations demonstrate state-of-the-art results on the doubly sparse transform learning problem and comparable performance with its dense variant at significantly lower computational costs and sometimes faster convergence and better avoidance of bad local minima.
\end{abstract}

%% file: sections/0-introduction.tex
% !TEX root = ./main.tex
\section{Introduction}
The problem of finding sparse representations in signal processing has been extensively studied due to their potential for capturing essential information. It has been observed that natural signals tend to have fewer essential components in specific domains, while the remaining ones are typically insignificant in terms of magnitude. Hence, such mappings are utilized in denoising applications and feature extraction for fields like image and audio processing, biomedical signal analysis and telecommunications. Furthermore, the usefulness of sparse representations is extended to data science, where information must be brought into a convenient form in order to be stored or transmitted efficiently. More recently, machine learning has introduced new demands, where sparse modeling plays an essential role in dimensionality reduction, classification, clustering, and model regularization. The latter helps prevent overfitting by suppressing noise and irrelevant patterns that lead to poor generalization. By eliminating redundant components or elements unrelated to the underlying signal, such as samples drawn from noise distributions, sparse modeling enhances the identification of the signal's intrinsic structure and improves its separability from other patterns and background data. 

\subsection{Transform Learning Problem}
The literature on sparse representation techniques distinguishes between two main approaches: the analytic approach and the learning-based approach. The first category introduces mathematically defined transforms, like the Discrete Cosine Transform (DCT), the Discrete Fourier Transform (DFT) and Wavelets, which are typically closed-form solutions derived from signal processing principles or assumptions about stationary stochastic processes. However, the assumptions underlying these solutions may be too general and might fail to capture the intrinsic structure of more specific classes of signals. Consequently, relaxing these assumptions is often considered necessary to allow for data adaptability.

The learning-based approach is widely discussed in the context of the dictionary learning problem~\cite{DL_book} and can be further divided into several subcategories, depending on the underlying assumptions. These models are often formulated as Maximum A Posteriori (MAP) estimators, assuming a certain prior distribution for the data and seeking to maximize the posterior probability given the observed distribution. This typically translates to minimizing the reconstruction error in a least-squares sense, implicitly assuming a mean-centered multivariate white Gaussian noise distribution. Two such popular models are synthesis and analysis priors~\cite{7071162}.

The transform learning problem was introduced by Ravishankar and Bresler in a series of papers~\cite{6339108, 7045534, 6466952, 6638226, 7069264, 7051209}. It relies on the prior assumption that a signal \( y \in \mathbb{R}^n \) is \textit{approximately sparsifiable} in a domain induced by the transform $ W \in \mathbb{R}^{n \times n} $, i.e., $ W y = x + \xi $ (where $ x $ is the assumed sparse signal and $\xi $ denotes the noise component), on the premise that the noise component lies in the transform domain, as opposed to the signal domain. This resembles the mechanism of the canonical transforms, which decompose a signal into a series of functions, each weighted by corresponding spectral coefficients. The transform learning optimization problem is stated as:
\begin{equation}
\min_{W \in \mathbb{R}^{n \times n},\, X \in \mathbb{R}^{n \times N}} \quad \| W Y - X \|_F^2 - \lambda \delta \log |\det W| + \lambda \|W\|_F \quad \text{s.t.} \quad \|X_i \|_0 \leq r,
\label{eq:bresler_2}
\end{equation}
where $ W \in \mathbb{R}^{n \times n} $ is the learnt transform, while the signals and their sparse representation are stacked as columns in $ Y $ and $ X \in \mathbb{R}^{n \times N}$.

To solve this objective, the general transform learning framework employs an alternating minimization scheme, iteratively alternating between a sparse coding step, learning the sparse representations $X$, for which a closed-form solution obtained by hard-thresholding is derived, and a transform update step to learn the matrix $W$. The problem, however, in this transform update step, is the conditioning of the transform matrix, which the original authors managed by incorporating logarithmic determinant and Frobenius norm penalty functions into the objective. In the recent work of Pătrașcu et al.~\cite{PRI24}, this behavior is analyzed through the learned transform's tendency to balance generalization and particularization. The condition number becomes a measure of the trade-off between generalization behavior and representation error. Thus, the optimal transform inherently interpolates between an orthogonal matrix, also known as the solution to the Orthogonal Procrustes Problem~\cite{Schönemann_1966}, and the unconstrained least-squares solution. The importance of transform conditioning demands more rigorous restrictions, therefore explicit constraints replace regularization-based approaches:
\begin{equation}
\min_{W \in \mathbb{R}^{n \times n},\, X \in \mathbb{R}^{n \times N}} \quad \| W Y - X \|_F^2 \quad \text{s.t.} \quad \|X_i \|_0 \leq r, \quad \kappa(W) \leq \rho, \quad \|W\|_F = \tau,
\label{eq:patrascu_1}
\end{equation}
where the equality constraint enforces the avoidance of trivial solutions over the singular spectrum and the real parameter $ \rho $ is chosen to be greater than $ 1 $, since the highest level of generalization is achieved by an orthogonal matrix.

\subsection{Double Sparsity}

Given the performance of the canonical transforms, it is of interest to extend their speed and stability to settings that allow for data-adaptive flexibility. The literature often considers the product $ T \Phi $, where $ T $ is the learnt factor and $ \Phi $ denotes a known analytical transform. Under this formulation, the flexible part aims to refine the space induced by $ \Phi $, enhancing the ability to effectively represent a set of signals. This refinement enables the assumption of sparsity over $ T $, as it acts on the already efficient transform, relying on its performance. Therefore, the problem has a doubly sparse significance: learning a sparse matrix for sparse signal representation. 

This exact problem has been approached for the dictionary learning problem~\cite{elad_doubly} and adapted to the transform learning framework in~\cite{6466952}. The doubly sparse transform learning problem is the same as Problem~\eqref{eq:bresler_2} with an additional sparsity constraint, i.e., $ \| T \|_0 \leq R $, where $ T $ is the learnt factor in the $ W = T \Phi $ decomposition and $ R $ denotes the maximum allowed number of non-zero elements. This heuristic has been proven to work well in practice.

The fast convergence promised by the explicit conditioning and the aforementioned ideas about double sparsity motivate the contribution of this paper.

\subsection{Contribution}

We approach the linear transformation used in the transform-based prior as the product $ T \Phi $, where $ \Phi $ represents an analytic orthogonal transform, such as the DFT or DCT, and $ T $ is the learnt, sparse part, with an explicit bound on its conditioning. The sparse factor aims to improve adaptability to data using a small number of refining coefficients, relying on the stability of the analytical transform. The connection between matrix components and spectral information remains, however, an open problem in the literature. We outline the main contributions of this work:
\begin{itemize}
    \item We introduce a doubly sparse, explicitly conditioned transform learning framework. Although it builds upon existing principles, this specific problem formulation is introduced for the first time in this paper.
    \item We develop a proximal-based optimization algorithm to solve a relaxed formulation. A key component of our approach is the singular spectrum projection, for which we derive a closed-form solution and prove its uniqueness despite the non-convexity of the feasible space. To the best of our knowledge, this exact projection has not been previously established in the literature.
    \item We discuss a theoretical convergence analysis, which suggests that the proposed method could be integrated into modern non-convex optimization frameworks.
    \item We empirically demonstrate that by incorporating specific algorithmic enhancements, the practical implementation significantly accelerates the convergence of the objective function and yields better results than existing approaches on certain metrics, particularly when the feasible space is more permissive.
\end{itemize}
\subsection{Paper Structure}

This paper is organized as follows. Section 2 introduces the \textit{doubly sparse explicitly conditioned transform learning} problem. Section 3 provides a theoretical analysis of the established framework, deriving the closed-form projection of the singular spectrum and discussing the convergence to a stationary point under specific conditions. Section 4 details the practical implementation of the proposed algorithm, introduces specific algorithmic enhancements, and analyzes its computational complexity, while numerical experiments are presented in Section 5. We conclude in Section 6.

%% file: sections/1-problem_formulation.tex
% !TEX root = ./main.tex

\section{Doubly Sparse Explicitly Conditioned Transform Learning Problem}

Let $ Y \in \mathbb{R}^{n \times N} $ and $ X \in \mathbb{R}^{n \times N} $ denote the data matrix and its corresponding representation at a given iteration of the alternating minimization algorithm, where $ n $ is the dimension of each of the $ N $ signals in the given set. Consider the aforementioned matrix decomposition $ W = T \Phi $ with $ W$, $ T $, $\Phi \in \mathbb{R}^{n \times n}$. Since the sparse representation step in variable $ X $ is computed in the same manner as in the works of~\cite{6339108, PRI24}, we consider the isolated transform update step in the alternating minimization algorithm, namely the following optimization problem:
\begin{equation}
\min_{T \Phi \in \mathbb{R}^{n \times n}} \quad \| T \Phi Y - X \|_F^2 \quad \text{s.t.} \quad \|T \|_0 \leq R, \hspace{14pt} \kappa(T \Phi) \leq \rho, \quad \| T \Phi \|_F = \tau,
\label{eq:doubly_sparse_explicit_intermediar}
\end{equation}
where $ R $ represents the target sparsity of matrix $ T $.

The decision variable $ T \Phi $ can be reduced to $ T $ as $ \Phi $ represents a fixed, conventionally known matrix. Following the same consideration, we introduce the variable $\tilde{Y} = \Phi Y$, as the learnt transform acts only on the spectral components in the domain induced by $ \Phi $. The norm-preserving property of the orthogonal canonical transform also simplifies the constraints imposed on the condition number and the Frobenius norm, not affecting the singular spectrum of the overall transform. The final formulation for the \textit{doubly sparse explicitly conditioned transform learning} problem follows:
\begin{equation}
\min_{T \in \mathbb{R}^{n \times n}} \quad \| T \tilde{Y} - X \|_F^2 \quad \text{s.t.}  \quad \|T \|_0 \leq R, \hspace{14pt} \kappa(T) \leq \rho, \quad \| T \|_F = \tau.
\label{eq:doubly_sparse_explicit}
\end{equation}

An optimal solution for the transform update step cannot be derived under both the sparsity and conditioning constraints. The connection between matrix components and singular values has been extensively studied in the literature, especially in \textit{perturbation theory}, without conclusive results for this particular problem. Thus, we consider the relaxed surrogate of the NP-hard formulation, substituting the $ \ell_0 $ pseudo-norm constraint in Problem~\eqref{eq:doubly_sparse_explicit} with the convex $ \ell_1 $-norm penalty function: 
\begin{equation}
\min_{T \in \mathbb{R}^{n \times n}} \quad \| T \tilde{Y} - X \|_F^2 + \lambda \| \text{vec}(T) \|_1 \quad \text{s.t.} \quad \kappa(T) \leq \rho, \quad \| T \|_F = \tau.
\label{eq:doubly_sparse_explicit_relaxed}
\end{equation}

We proceed with a theoretical analysis of the challenges that arise in Problem~\eqref{eq:doubly_sparse_explicit_relaxed}.

%% file: sections/2-theoretical_analysis.tex
% !TEX root = ./main.tex

\section{Theoretical Analysis}

In the literature, optimization problems that consist of a smooth objective and a non-smooth penalty are typically addressed using Forward-Backward splitting methods. Specifically, when the 'forward' step evaluates the gradient of the smooth term, this framework is widely known as Proximal Gradient Descent. When the penalty is chosen to be the $ \ell_1 $-norm of the decision variable, the specific scheme is classically referred to as the Iterative Shrinkage-Thresholding Algorithm (ISTA) and yields the following update for any iterate $ T_k $:
\begin{equation}
    T_{k+1} = \mathcal{S}_{\alpha \lambda}(T_k - \alpha \nabla f(T_k)),
\end{equation}
where $ f $ denotes the smooth objective, $ \alpha $ is the step size, $ \lambda $ is the regularization parameter and $ \mathcal{S}_{\alpha \lambda} $ represents the soft-thresholding operator. Monotone convergence to a global optimum is guaranteed if $ f $ is convex.

The overall objective function in Problem~\eqref{eq:doubly_sparse_explicit_relaxed} comprises a smooth, convex, quadratic objective and the non-smooth, convex $ \ell_1 $-norm penalty, which makes it an ideal candidate for ISTA. However, the additional constraints imposed on the singular spectrum define a highly non-convex feasible space. While this non-convexity precludes guarantees of finding the global minimum, we discuss below how the algorithm's convergence can still be analyzed within modern frameworks for non-convex optimization.

\subsection{Singular Spectrum Projection}
We define the projection operator $ P_{C(\rho, \tau)}(T) $ as the function that minimizes the distance between any given $ T \in \mathbb{R}^{n \times n} $ and the space $ C(\rho, \tau) = \{ A \in \mathbb{R}^{n \times n} \mid \kappa(A) \leq \rho, \| A \|_F = \tau\}$ in the Frobenius norm sense. The following lemma introduces a crucial result for both the convergence and the implementation of the proposed algorithm.

\begin{lemma}[Unique Closed-Form Projection]
    \label{lem:unique_proj}
    Let $ T \in \mathbb{R}^{n \times n}$ with its Singular Value Decomposition given by $ T = U \Sigma V^T $, where $ \Sigma = \text{diag}(\sigma) $ and $ \sigma \in \mathbb{R}^n_+ $ denotes the vector of singular values and let $ \rho \geq 1 $ and $ \tau > 0 $ be given real constants. The solution $ \hat{T} $ to the matrix nearness problem:
    \begin{equation}
        \arg\min_{\hat{T} \in \mathbb{R}^{n \times n}} \quad \| \hat{T} - T \|_F^2  \quad \text{s.t.} \quad \kappa(\hat{T}) \leq \rho, \quad \| \hat{T} \|_F = \tau
        \label{eq:matrix_nearness}
    \end{equation}
    preserves the singular vectors of $ T $, i.e., $ \hat{T} = U \hat{\Sigma} V^T $, and its singular spectrum coincides with the optimum $ \hat{\sigma}$ of the following 1D vector projection problem:
    \begin{equation}
        \arg\min_{\hat{\sigma} \in \mathbb{R}^{n}_+} \quad \| \hat{\sigma} - \sigma \|_2^2  \quad \text{s.t.} \quad \frac{\hat{\sigma}_{\max}}{\hat{\sigma}_{\min}} \leq \rho, \quad \| \hat{\sigma} \|_2 = \tau.
        \label{eq:1D}
    \end{equation}

    Furthermore, the two constraints can be decoupled. Let $ P_{K(\rho)}(\sigma) $ be the projection of $ \sigma $ onto the closed convex cone defined by $ K(\rho) = \{ x \in \mathbb{R}^n_+ \mid x_{\max} \leq \rho x_{\min} \}$. Then $ P_{K(\rho)}(\sigma) $ can be computed in $ \mathcal{O}(n) $ using the geometric approach introduced in~\cite{LI2020190} and the unique global singular values satisfying both constraints are given exactly by scaling the cone projection onto the Frobenius hypersphere:
    \begin{equation}
        \hat{\sigma} = \tau \frac{P_{K(\rho)}(\sigma)}{\| P_{K(\rho)}(\sigma) \|_2}.
    \end{equation}
\end{lemma}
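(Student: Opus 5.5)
The plan has two stages. First, I reduce the matrix nearness problem~\eqref{eq:matrix_nearness} to the vector problem~\eqref{eq:1D} using unitary invariance and von Neumann's trace inequality. Second, I solve~\eqref{eq:1D} with Moreau's decomposition for the closed convex cone $K(\rho)$.

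\textbf{Reduction to singular values.} Both constraints $\kappa(\hat T)\le\rho$ and $\|\hat T\|_F=\tau$ depend only on the singular values $s$ of $\hat T$. They are therefore invariant under $\hat T\mapsto P\hat T Q$ with $P,Q$ orthogonal, and under permutations of $s$. Expanding the objective gives
\begin{equation*}
\|\hat T - T\|_F^2=\|s\|_2^2+\|\sigma\|_2^2-2\,\mathrm{tr}(\hat T^T T).
\end{equation*}
Von Neumann's trace inequality gives $\mathrm{tr}(\hat T^T T)\le \langle s^\downarrow,\sigma^\downarrow\rangle$, where $\downarrow$ denotes nonincreasing order. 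Equality holds for $\hat T=U\,\mathrm{diag}(s^\downarrow)V^T$, where the ordering is the one used in $T=U\Sigma V^T$. Hence the optimal value of~\eqref{eq:matrix_nearness} equals the minimum of $\|s-\sigma\|_2^2$ over feasible $s$. This uses the rearrangement inequality: a minimizer of~\eqref{eq:1D} can be taken similarly ordered to $\sigma$, and the uniqueness shown below together with permutation symmetry of $K(\rho)$ forces this. So $\hat T=U\hat\Sigma V^T$ is optimal.

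For the claim that every optimizer has this form, I would invoke the equality case of von Neumann's inequality (Lewis' characterization). Equality forces $\hat T$ and $T$ to admit a simultaneous ordered SVD. Combined with uniqueness of $\hat\sigma$, this pins down $\hat T$. The non-uniqueness of $U,V$ when $\sigma$ has repeated entries does not matter. Permutation symmetry and uniqueness of $P_{K(\rho)}$ imply that equal entries of $\sigma$ receive equal entries of $P_{K(\rho)}(\sigma)$, so $U\hat\Sigma V^T$ is independent of the chosen SVD. \emph{I expect this equality-case argument, including the bookkeeping for repeated or zero singular values, to be the main technical obstacle.}

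\textbf{Decoupling.} On the sphere $\|s\|_2=\tau$ we have $\|s-\sigma\|_2^2=\tau^2+\|\sigma\|_2^2-2\langle s,\sigma\rangle$. Problem~\eqref{eq:1D} is therefore equivalent to maximizing $\langle s,\sigma\rangle$ over $s\in K(\rho)$ with $\|s\|_2=\tau$. The constraint $s\in\mathbb{R}^n_+$ is already built into $K(\rho)$, which is a closed convex cone.

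Let $p=P_{K(\rho)}(\sigma)$. Moreau's theorem gives $\sigma=p+q$ with $q\in K(\rho)^\circ$ and $\langle p,q\rangle=0$. For any feasible $s$,
\begin{equation*}
\langle s,\sigma\rangle=\langle s,p\rangle+\langle s,q\rangle\le\langle s,p\rangle\le \tau\|p\|_2,
\end{equation*}
using $\langle s,q\rangle\le 0$ and then Cauchy--Schwarz. Equality in Cauchy--Schwarz with $p\neq0$ forces $s=\tau p/\|p\|_2$. This choice is feasible because $K(\rho)$ is a cone, and it attains equality in the first step because $\langle p,q\rangle=0$. This gives both the formula for $\hat\sigma$ and its uniqueness.

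It remains to check $p\neq 0$. The vector $\mathbf{1}$ lies in $K(\rho)$ since $\rho\ge1$. If $p=0$, then $\sigma=q\in K(\rho)^\circ$, so $\langle\sigma,\mathbf 1\rangle\le0$, which together with $\sigma\ge0$ gives $\sigma=0$. The statement thus implicitly needs $T\neq0$; for $T=0$ every feasible point is optimal. I would add this as a remark.

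Finally, the $\mathcal O(n)$ complexity of computing $P_{K(\rho)}(\sigma)$ is simply quoted from~\cite{LI2020190}.
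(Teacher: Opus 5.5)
Your reduction via von Neumann's inequality is the same as the paper's. Your Moreau-decomposition argument for the decoupling, including the check $p\neq 0$ via $\mathbf{1}\in K(\rho)$, is correct and more rigorous than the paper's heuristic that the cone fixes the direction and the sphere fixes the length. The step that fails is the end of the equality-case argument, where you conclude that $U\hat\Sigma V^T$ is independent of the chosen SVD.

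Your equal-entries observation handles the coupled freedom $U_i\mapsto U_iQ$, $V_i\mapsto V_iQ$ for a repeated \emph{positive} singular value. It does not handle zero singular values. Let $U_0,V_0$ be the columns of $U,V$ belonging to the $d\ge 1$ zero singular values. Since they multiply a zero block, they can be replaced \emph{independently} by $U_0Q_1$ and $V_0Q_2$ with $Q_1,Q_2\in O(d)$. Every nonzero $x\in K(\rho)$ is strictly positive, because $x_{\min}=0$ forces $x_{\max}=0$. So $\hat\sigma$ assigns a common value $c>0$ to these indices, and the term $c\,U_0Q_1Q_2^TV_0^T$ genuinely depends on the choice. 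For example, $T=\mathrm{diag}(1,0)$ gives $\hat\sigma=\tau(\rho,1)/\sqrt{\rho^2+1}$. Both $\frac{\tau}{\sqrt{\rho^2+1}}\mathrm{diag}(\rho,1)$ and $\frac{\tau}{\sqrt{\rho^2+1}}\mathrm{diag}(\rho,-1)$ are feasible, both have the form $U\hat\Sigma V^T$ for a valid SVD of $T$, and both lie at the same minimal distance from $T$.

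So this is not bookkeeping you can repair: matrix-level uniqueness is false for every singular $T$, not only $T=0$, and your remark about $T=0$ should be strengthened accordingly. What your argument does establish is the following.
\begin{itemize}
\item For $T\neq 0$, the optimal singular values $\hat\sigma$ are unique.
\item The minimizers are exactly the matrices $U\hat\Sigma V^T$ with $(U,V)$ ranging over all SVDs of $T$.
\item Consequently $\hat T$ is unique precisely when $T$ is nonsingular.
\end{itemize}
The paper's proof also asserts uniqueness of $P_{C(\rho,\tau)}(T)$ without addressing this case. A correct version of the lemma needs the nonsingularity hypothesis for the matrix-level uniqueness claim.
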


\begin{proof}
    Given the invariance of the Frobenius norm under orthogonal transformations, the singular spectrum constrained Problem~\eqref{eq:matrix_nearness} can be reduced to Problem~\eqref{eq:1D} as a direct consequence of the equality condition of von Neumann's trace inequality. 

    In this form, we can extend the results from Section 2 of~\cite{LI2020190} to the more general case of matrices that are not necessarily symmetric and positive definite, obtaining a unique optimum using Algorithm 2.1 proposed in the same paper in linear time. Because $ K(\rho) $ is a closed convex cone for any $ \rho \geq 1 $ (which is the case of the conditioning parameter in the original Problem~\eqref{eq:doubly_sparse_explicit_relaxed}), the projection $ P_{K(\rho)}(\sigma) $ uniquely determines the optimal direction (or angle) of $ \sigma $, while the projection onto the boundary of the Frobenius ball strictly determines its magnitude (or length). Since any non-zero Euclidean vector $ \sigma \in \mathbb{R}^n_+ $ is uniquely defined by its direction and magnitude and given the fact that these two constraints are orthogonal, i.e., they do not interfere with each other, it follows that the two projections can be fully decoupled, yielding the exact and unique projection $ P_{C(\rho, \tau)}(T) $. 
\end{proof}

\subsection{Convergence Analysis}
Given the interplay between the proximal operator $ \mathcal{S}_{\alpha \lambda} $ and the projection operator $ P_{C(\rho, \tau)} $, a classical Forward-Backward algorithm is not directly tractable. This challenge suggests an extension to the Proximal Alternating Linearized Minimization (PALM) framework, introduced by Bolte et al. in~\cite{bolte:hal-00916090}, which provides a basis for establishing global convergence. Such an approach would naturally integrate into the existing Alternating Minimization (AM) scheme of our original problem, namely the transform update step and the sparse representation step. 

However, for the sake of simplicity and reduced computational complexity, we rely on the uniqueness of the projection derived in Lemma~\ref{lem:unique_proj}. We define the projected version of ISTA:
\begin{equation}
    T_{k+1} = P_{C(\rho, \tau)}(\mathcal{S}_{\alpha \lambda}(T_k - \alpha \nabla f(T_k))),
    \label{eq:iteration}
\end{equation}
and assume that the projection $ P_{C(\rho, \tau)} $ neither negates the decrease achieved by the gradient update, nor does it significantly compromise the sparsity promoted by the proximal operator, particularly as the iteration index $ k $ increases. We note that the sequence of iterates $ T_k $ is intrinsically bounded by the Frobenius norm projection onto the hypersphere of radius $ \tau $. Furthermore, the overall objective function is bounded from below, as the two norms in Problem~\eqref{eq:doubly_sparse_explicit_relaxed} can only take positive values. Finally, the objective function has the Kurdyka-{\L}ojasiewicz (KL) property because it is semi-algebraic, being composed of polynomials and sets defined by polynomial inequalities. 

Under these practical assumptions, we aim to approximate the properties of an ideal joint proximal operator, which is highly intractable due to the sequential application of the non-convex projection. Our approach, thereafter, is strongly motivated by Theorem 5.1 from Attouch et al.~\cite{attouch2011} regarding the inexact Forward-Backward splitting algorithm, which provides a theoretical framework that justifies the conditional convergence to a critical point of the proposed method in practice.

%% file: sections/3-algorithm.tex
% !TEX root = ./main.tex

\section{Proposed Implementation}

To improve the convergence rate of the proposed method, we employ Nesterov's acceleration scheme, thus transforming the projected Iterative Shrinkage-Thresholding Algorithm (ISTA) step into a Fast Iterative Shrinkage-Thresholding Algorithm (FISTA) update, introduced by Beck and Teboulle in~\cite{Beck2009AFI}. However, because our transform update constitutes a single step in the original Alternating Minimization framework, where the objective function varies as the sparse representation $ X $ is updated, relying on a fixed step size bounded by $ 1/L $, where $ L $ denotes the Lipschitz constant, is overly conservative. Instead, we compute the gradient step size $ \alpha_k $ dynamically at each step using the steepest descent approach:
\begin{equation}
    \alpha_k = \frac{1}{2} \frac{\| \nabla f(T_k) \|_F^2}{\| \nabla f(T_k) \tilde{Y} \|_F^2}.
\end{equation}

The sparsity constraint imposed by the penalty function effectively contracts the feasible space, and, by extension, the search space. This has a positive impact on convergence for higher choices of $ \rho $, which correspond to a more permissive feasible space. Thus, we adopt a continuation strategy for the regularization parameter $ \lambda $. A similar strategy is employed in the proximal-gradient homotopy method for sparse least-squares problems proposed in~\cite{10.1137/120869997}. We initialize the algorithm with a relatively large value for the penalty parameter. By starting with a highly constrained problem, the algorithm rapidly isolates the most significant components of the transform, which helps circumvent poor local minima and improves convergence speed. As the iterations progress, we gradually relax the penalty parameter towards its target value. 

Following the singular spectrum projection $ P_{C(\rho, \tau)} $, numerical perturbations are inevitable. Furthermore, while Nesterov's acceleration significantly improves the global convergence rate, its inherent momentum term computes a linear combination of previous iterates. This mechanism is known to induce oscillatory behavior, commonly referred to as the "ripple effect", frequently reviving previously zeroed elements into negligibly small non-zero values. To handle this, a post-projection clipping tolerance is introduced to enforce exact zeros for negligibly small entries. Note that as the target condition number $ \rho $ or the dimensionality $ n $ of the matrix increases, the structure becomes increasingly sensitive to these perturbations. Thus, the clipping tolerance must be strictly controlled and not overly generous. Additionally, we observe empirically that the sparse support of the matrix $ T $ tends to stabilize after a certain number of iterations. Consequently, we introduce a computational heuristic: once the sparsity pattern stabilizes, we fix the non-zero structure of the matrix and substitute the soft-thresholding operator $ \mathcal{S}_{\alpha \lambda} $ with a hard-thresholding step. Note that this heuristic step combined with the relaxation of $ \ell_1 $-norm penalty parameter compensates for the fact that we prioritize the spectral projection over the Soft-Thresholding step.

The overall approach is summarized in Algorithm~\ref{alg:proposed}. As the dimension of the problem grows, the linear time complexity $ \mathcal{O}(n) $ of the 1D projection derived in Lemma~\ref{lem:unique_proj} guarantees that the overall computational bottleneck remains the SVD, resulting in a cubic time complexity. However, matrix operations in the proposed algorithm could benefit from the imposed sparse structure of $ T $, especially as the SVD operation comes after the soft-thresholding step.

\begin{algorithm}[htbp]
\caption{Doubly Sparse Explicitly Conditioned Transform Learning}
\label{alg:proposed}
\begin{algorithmic}[1]
\Require Data matrix $Y$, Analytical orthogonal transform $ \Phi $, target column sparsity $r$, parameters $\rho, \tau$, target penalty $\lambda$, max iterations $M$, stabilization iteration $K$, clipping tolerance $\epsilon$, homotopy steps $N_h$.
\State Initialize $\tilde{Y} = \Phi Y$, $T_0 = I_n$ , $T_{-1} = T_0$, $t_1 = 1$.
\State Generate homotopy sequence $\Lambda = \{\lambda_1, \dots, \lambda_{N_h}\}$ logarithmically decreasing from an initial $\lambda_{start}$ to $\lambda$.
\For{$k = 1, \dots, M$}
    \State Compute Sparse Representation: $X_{k} = H_r(T_{k-1} \tilde{Y})$
    \State Update Nesterov Momentum: $t_{k+1} = \frac{1 + \sqrt{1 + 4t_k^2}}{2}$, \quad $\beta_k = \frac{t_k - 1}{t_{k+1}}$
    \State Compute: $Z_k = T_{k-1} + \beta_k(T_{k-1} - T_{k-2})$
    \State Compute Gradient: $ \nabla f(Z_k) = 2(Z_k \tilde{Y} - X_k)\tilde{Y}^T $
    \State Compute gradient step size: $ \alpha_k = \frac{1}{2} \frac{\|\nabla f(Z_k)\|_F^2}{\|\nabla f(Z_k) \tilde{Y}\|_F^2} $
    \State Gradient descent step: $ \hat{Z}_k = Z_k - \alpha_k \nabla f(Z_k) $
    \If{$k \leq K$}
        \State Select $\lambda_k$ from $\Lambda$ (or $\lambda$ if $k > N_h$)
        \State Apply soft-thresholding: $ \tilde{Z}_k = \mathcal{S}_{\alpha_k \lambda_k}(\hat{Z}_k) $
    \Else
        \State Skip soft-thresholding: $ \tilde{Z}_k = \hat{Z}_k $ 
    \EndIf
    \State Singular Value Decomposition: $ U \Sigma V^T = \tilde{Z}_k $
    \State Extract singular values $ \sigma = \text{diag}(\Sigma) $
    \State Compute 1D Cone Projection: $ \hat{\sigma} = P_{K(\rho)}(\sigma) $ 
    \State Scale to Frobenius hypersphere: $ \hat{\sigma} = \tau \frac{\hat{\sigma}}{\|\hat{\sigma}\|_2} $
    \State Reconstruct projected matrix: $ \tilde{T}_k = U \text{diag}(\hat{\sigma}) V^T $
    
    \If{$k < K$}
        \State Apply clipping tolerance: $T_k = \tilde{T}_k \odot (|\tilde{T}_k| > \epsilon)$
    \ElsIf{$k == K$}
        \State Apply clipping tolerance: $T_k = \tilde{T}_k \odot (|\tilde{T}_k| > \epsilon)$
        \State Extract and save sparsity mask: $\Omega = (T_k \neq 0)$
    \Else
        \State Apply hard-thresholding to fixed support: $T_k = \tilde{T}_k \odot \Omega$
    \EndIf
\EndFor
\State \textbf{return} $T_M, X_M$
\end{algorithmic}
\end{algorithm}

We empirically validate the use of our algorithm in the following section.

%% file: sections/4-numerical_experiments.tex
% !TEX root = ./main.tex

\section{Numerical Experiments}

Proceeding further, the focus will shift to the empirical evaluation of the algorithm introduced in the previous section, compared to existing approaches. The actual implementation is publicly available online\footnote{\url{https://github.com/calcuttarain/Doubly-Sparse-Explicitly-Conditioned-Transform-Learning}} and the code was developed in MATLAB R2026a. For a fair comparison, we used the exact code provided by Pătrașcu et al. in~\cite{PRI24} and Ravishankar and Bresler in~\cite{6466952}. Following a similar testing strategy to that of the aforementioned authors, in the factorization of the transform, $ W = T \Phi $, the canonical transform, $ \Phi $, is taken to be the orthogonal linear operator $ C \otimes C$, where $ C $ denotes the $ \sqrt{n} $ by $ \sqrt{n} $ Discrete Cosine Transform (DCT) matrix. The representation, $ X $, is initially assumed to lie in the domain induced by $ \Phi $ and the learnt transform, $ T $, is initialized with the unconstrained least-squares solution. 

The sparsity level for each column of the sparse representation matrix, $ X_i $, is fixed at $ r = 6 $. The upcoming experiments are performed on $ \sqrt{n} \times \sqrt{n} $ non-overlapping patches extracted from the $ 512 \times 512 $ grayscale images of \textit{Barbara}, \textit{Couple} and \textit{Cameraman}, which are used for training, i.e., for the matrix $ Y $, and of images \textit{Hill} and \textit{Man} which are excluded from the transform learning phase in order to observe generalization performance. Each patch has its mean subtracted, being afterwards vectorized and stacked as a column in the data matrix $ Y $. Given the patch size of $ 8 \times 8 $, the dimension of the resulting vector is $ n = 64 $, and thus, the matrices $ W, T, \Phi \in \mathbb{R}^{64 \times 64} $. 

A comparative analysis will be presented between the algorithms proposed by the authors referenced at the beginning of this section and the algorithm introduced in this paper. For perspective, results obtained using the Discrete Cosine Transform are included. Various values of the parameter $ \lambda $ were employed as the starting point for implementing Bresler's method. The resulting transform yields the parameters $ \rho = \kappa(W) $ and $ \tau = \| W \|_F $, which serve as inputs for computing both the sparse and dense conditioned transform learning algorithms, as seen in Figure~\ref{fig:fig1}. Two metrics are considered in the evaluation of the minimization of the cost function. The first and the most straightforward one is the squared Frobenius norm, i.e., $ \| W Y - X \|_F^2 $. The second metric involves a normalized version, measuring the relative reconstruction error: $ \| W Y - X \|_F^2 / \| W Y \|_F^{2} $.

To further evaluate our approach, we consider an image denoising application, with the results summarized in Table~\ref{tab:denoising_results}. Again, following a similar strategy to that of the authors of~\cite{6466952,7045534,PRI24}, the evaluation is performed under various additive white Gaussian noise levels, for $ \sigma \in \{5, 10, 15, 20, 100\}$. In this scenario, the transform, $ T \in \mathbb{R}^{121 \times 121}$, is trained adaptively on the corrupted image using fully overlapping patches ($r=1$). At each iteration, a random subset of $N=500$ patches is selected for the transform update step. The variable sparsity update step employs an error tolerance threshold scaling factor of $C=1.04$. The alternating minimization is executed for 20 iterations (reduced to 5 iterations for the severe noise case of $\sigma=100$). The homotopy strategy is initiated after 10 iterations, and the fixed support heuristic is omitted in this experiment. The proposed method outperforms both the dense learning variants introduced in~\cite{7045534,PRI24} and the unconditioned doubly sparse method provided in~\cite{6466952} across low to moderate noise regimes. The latter, however, exhibits a lower tendency to overfit on highly corrupted data. 

\begin{figure}[htbp]
    \centering
    
    % Test 1 (Bresler: 2.1e-4)
    \begin{subfigure}{\textwidth}
        \centering
        \includegraphics[width=0.48\linewidth]{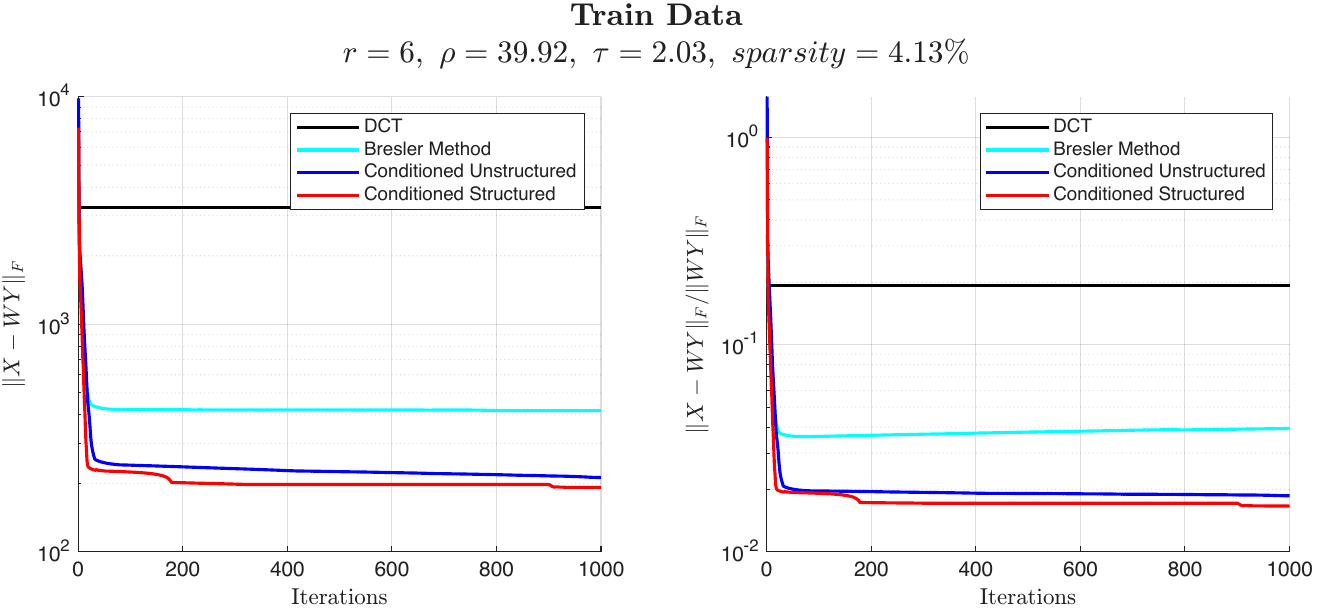}
        \hfill
        \includegraphics[width=0.48\linewidth]{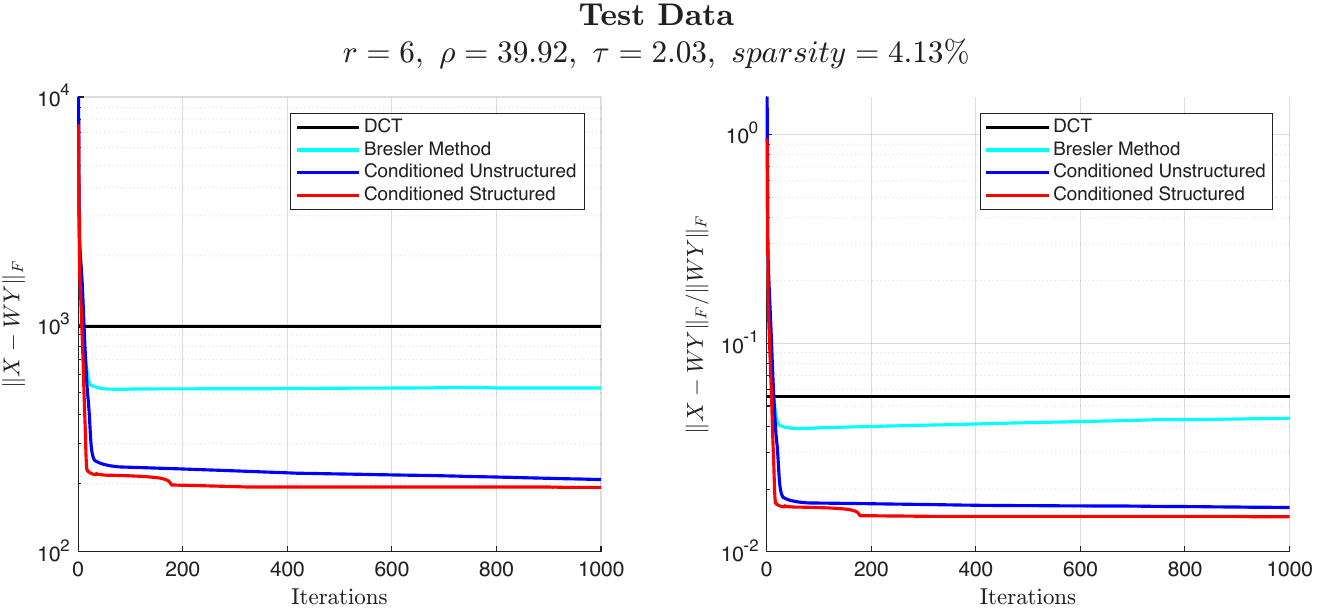}
        \caption{$\lambda_{\text{bresler}} = 2.1 \times 10^{-4}$}
        \label{fig:sub1}
    \end{subfigure}
    
    \vspace{1em} 

    % Test 2 (Bresler: 2.1e-7)
    \begin{subfigure}{\textwidth}
        \centering
        \includegraphics[width=0.48\linewidth]{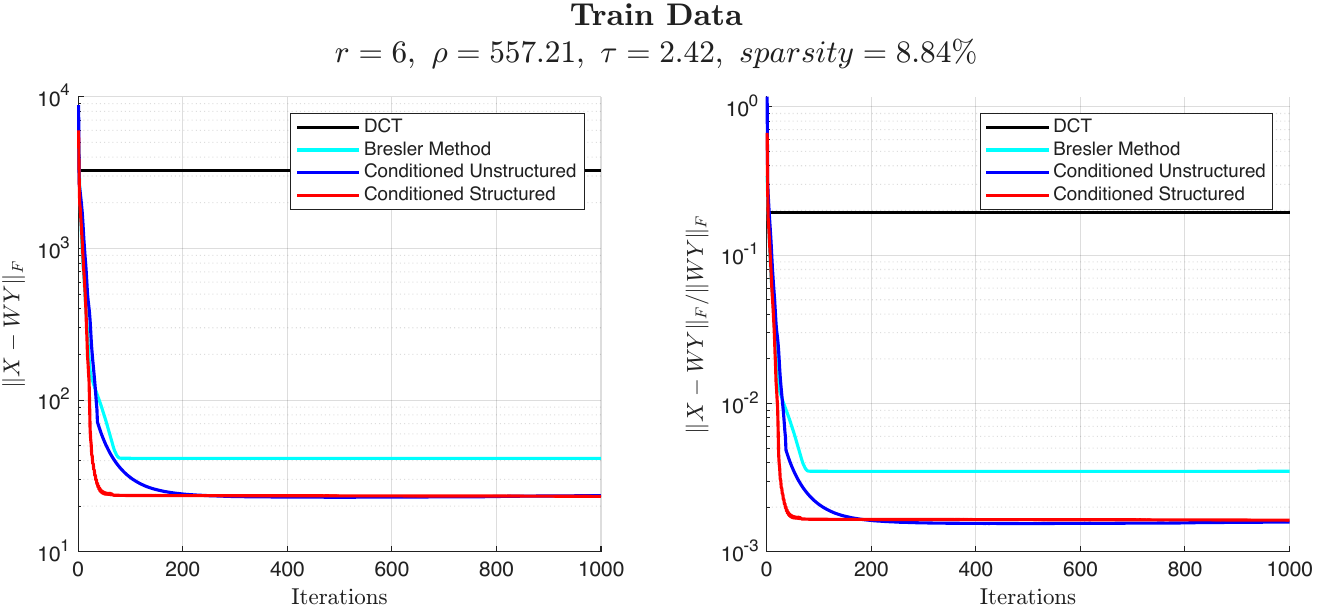}
        \hfill
        \includegraphics[width=0.48\linewidth]{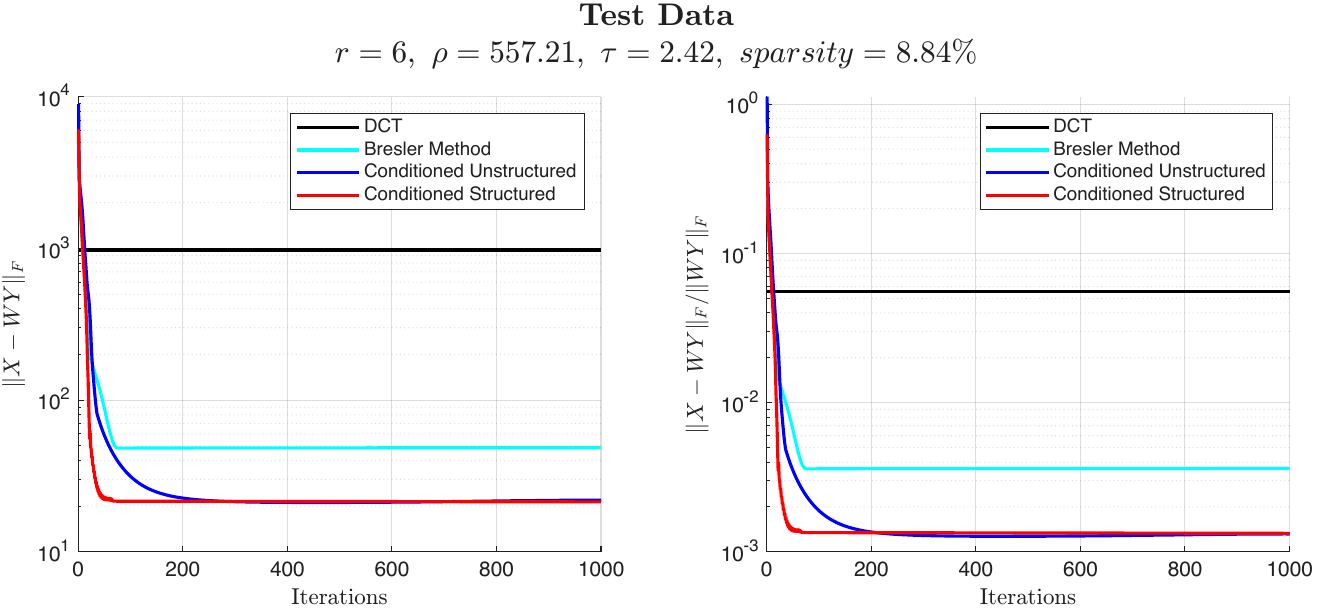}
        \caption{$\lambda_{\text{bresler}} = 2.1 \times 10^{-7}$}
        \label{fig:sub2}
    \end{subfigure}
    
    \vspace{1em}

    % Test 3 (Bresler: 2.1e-12)
    \begin{subfigure}{\textwidth}
        \centering
        \includegraphics[width=0.48\linewidth]{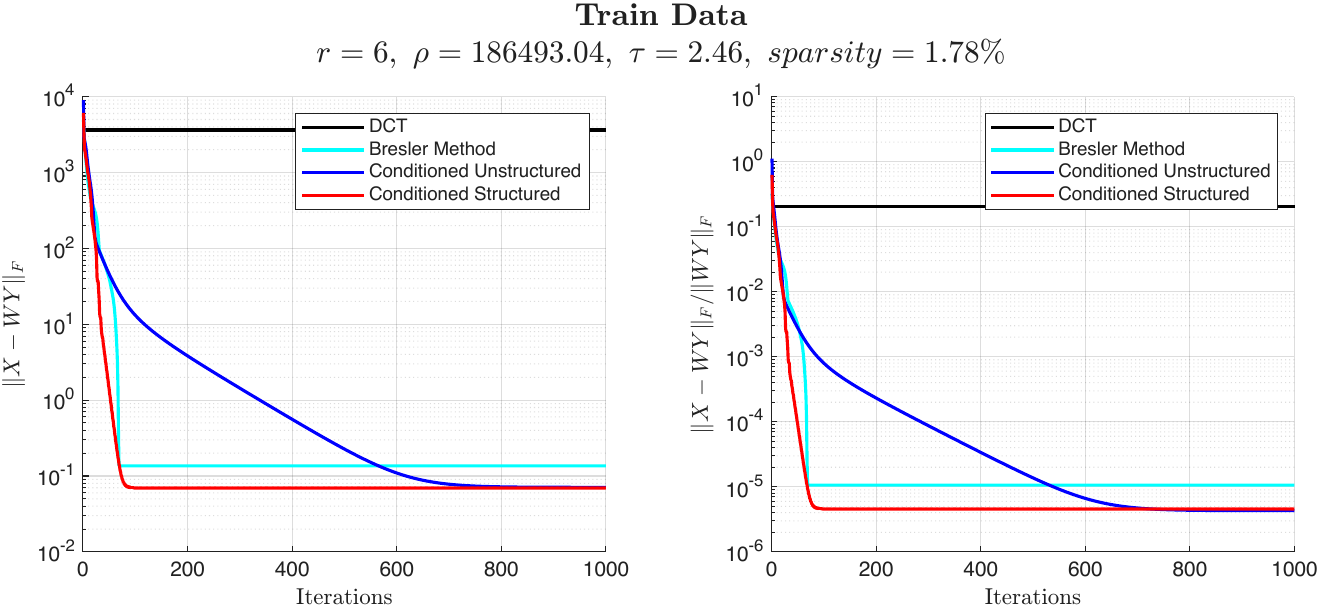}
        \hfill
        \includegraphics[width=0.48\linewidth]{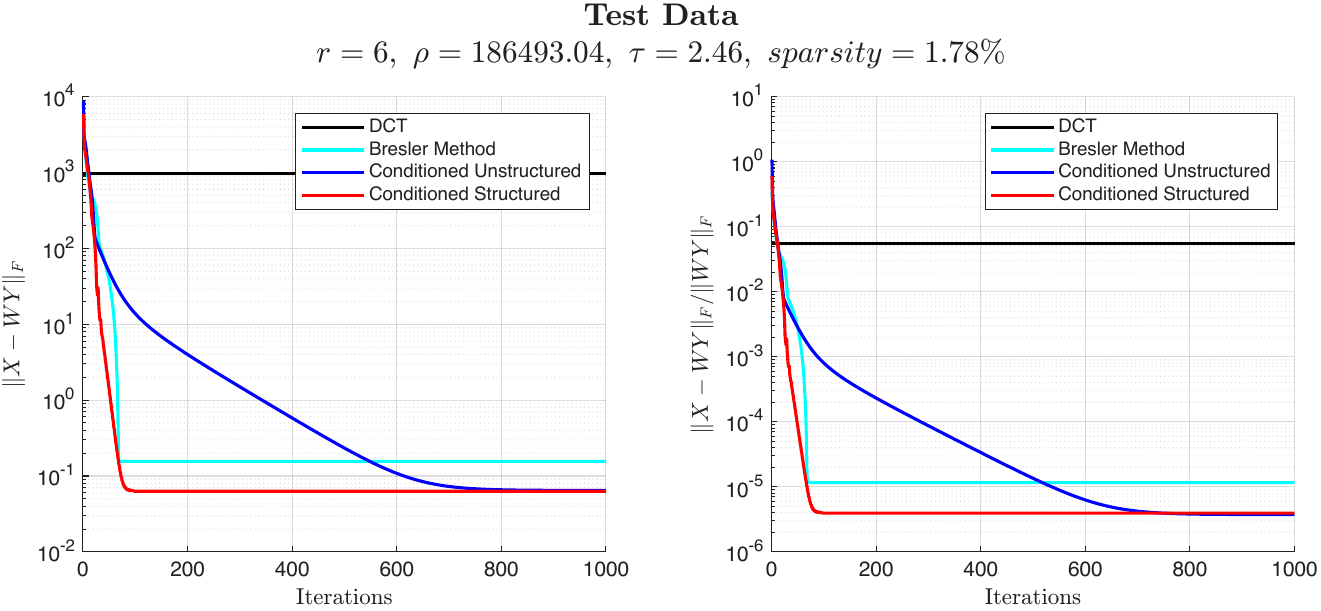}
        \caption{$\lambda_{\text{bresler}} = 2.1 \times 10^{-12}$}
        \label{fig:sub3}
    \end{subfigure}
    
    \caption{Cost function convergence across different conditioning regimes. Black solid line indicates Discrete Cosine Transform performance, cyan indicates the transform learning method provided in~\cite{7045534}, blue indicates the Explicitly Conditioned transform learning of~\cite{PRI24}, and red indicates the proposed method. The \textit{sparsity} value denotes the final sparsity percentage of the proposed transform.}
    \label{fig:fig1}
\end{figure}

\begin{table*}[htbp]
\centering
\caption{Denoising performance comparison in terms of PSNR and SSIM (in paranthesis) across various noise levels ($\sigma$). Best PSNR and SSIM results for each scenario are highlighted in bold.}
\label{tab:denoising_results}
\resizebox{\textwidth}{!}{
\begin{tabular}{llccccc}
\toprule
Image & $\sigma$ & DCT & Dense Bresler & Dense Conditioned & Sparse Bresler & \textbf{Proposed} \\
\midrule
\multirow{5}{*}{\textbf{Cameraman}} & 5 & 37.522 (0.954) & 37.343 (0.953) & 37.335 (0.953) & 37.410 (0.953) & \textbf{37.531} (\textbf{0.954}) \\
 & 10 & 33.309 (0.919) & 33.028 (0.916) & 33.047 (0.916) & 33.119 (0.917) & \textbf{33.329} (\textbf{0.920}) \\
 & 15 & 30.955 (0.884) & 30.521 (0.877) & 30.529 (0.877) & 30.713 (0.881) & \textbf{31.017} (\textbf{0.885}) \\
 & 20 & 29.053 (0.846) & 28.615 (0.836) & 28.632 (0.837) & 28.855 (0.842) & \textbf{29.129} (\textbf{0.846}) \\
 & 100 & \textbf{21.084} (\textbf{0.614}) & 20.419 (0.590) & 20.522 (0.593) & 20.576 (0.590) & 20.081 (0.577) \\
\midrule
\multirow{5}{*}{\textbf{Hill}} & 5 & 36.811 (0.982) & 36.662 (0.981) & 36.663 (0.981) & 36.808 (0.982) & \textbf{36.812} (\textbf{0.982}) \\
 & 10 & 32.968 (0.945) & 32.599 (0.941) & 32.617 (0.941) & 32.872 (\textbf{0.945}) & \textbf{32.973} (0.945) \\
 & 15 & 30.867 (0.902) & 30.433 (0.893) & 30.389 (0.893) & 30.794 (\textbf{0.903}) & \textbf{30.878} (0.902) \\
 & 20 & 29.399 (\textbf{0.856}) & 28.928 (0.843) & 28.934 (0.843) & 29.173 (0.856) & \textbf{29.406} (0.856) \\
 & 100 & \textbf{23.780} (\textbf{0.589}) & 23.654 (0.583) & 23.663 (0.583) & 23.666 (0.586) & 23.660 (0.583) \\
\midrule
\multirow{5}{*}{\textbf{Baboon}} & 5 & 35.057 (0.987) & 35.072 (0.986) & 35.063 (0.986) & \textbf{35.115} (0.986) & 35.058 (\textbf{0.987}) \\
 & 10 & 30.313 (0.956) & 30.114 (0.953) & 30.109 (0.954) & \textbf{30.336} (0.956) & 30.324 (\textbf{0.956}) \\
 & 15 & 27.798 (0.916) & 27.361 (0.908) & 27.382 (0.908) & 27.731 (\textbf{0.917}) & \textbf{27.820} (0.916) \\
 & 20 & 26.099 (0.870) & 25.555 (0.853) & 25.588 (0.854) & 25.961 (\textbf{0.871}) & \textbf{26.131} (0.870) \\
 & 100 & 19.627 (0.409) & 19.564 (0.401) & 19.563 (0.401) & \textbf{19.633} (\textbf{0.417}) & 19.567 (0.402) \\
\midrule
\multirow{5}{*}{\textbf{Barbara}} & 5 & 37.989 (0.987) & 37.689 (0.987) & 37.679 (0.987) & 37.719 (0.987) & \textbf{38.001} (\textbf{0.987}) \\
 & 10 & 34.003 (0.968) & 33.338 (0.967) & 33.373 (0.967) & 33.461 (0.968) & \textbf{34.034} (\textbf{0.968}) \\
 & 15 & 31.609 (0.948) & 30.699 (0.944) & 30.711 (0.943) & 30.907 (0.947) & \textbf{31.671} (\textbf{0.948}) \\
 & 20 & 29.903 (0.924) & 28.890 (0.915) & 28.892 (0.915) & 29.173 (0.922) & \textbf{29.972} (\textbf{0.925}) \\
 & 100 & \textbf{21.521} (\textbf{0.626}) & 21.210 (0.608) & 21.209 (0.608) & 21.290 (0.618) & 21.224 (0.609) \\
\midrule
\bottomrule
\end{tabular}
}
\end{table*}

%% file: sections/5-conclusions.tex
% !TEX root = ./main.tex

\section{Conclusions}

This paper provides the theoretical foundation for the formulation of the \textit{doubly sparse explicitly conditioned transform learning} problem. According to our literature review, this appears to be the first documented instance of this problem. The proposed algorithm shows promising results when compared to existing work, at a lower computational complexity needed for the data reconstruction process. The doubly sparse transform formulation proposed by Ravishankar and Bresler~\cite{6466952} was not included in the cost function convergence test, as it was observed that it does not perform significantly better than the dense one provided in~\cite{7045534}. 

Although global convergence has not been established, empirical observations show that the assumptions made in Section 3 are justified, as the unique projection assures only minimal compromise over the matrix sparsity. The newly derived projection in Lemma~\ref{lem:unique_proj} step is both cheap and exact. Furthermore, the reduced complexity afforded by the sparse structure of $ T $ becomes increasingly advantageous as the problem dimension grows, enabling faster convergence and lowering the time complexity of the rather expensive SVD of dense matrices.

Considering the nature of the signal of interest, one may choose a corresponding canonical transform. An interesting use of the reduced complexity provided by the assumed sparse structure of the transform is the mini-batch processing, in big data applications, and online learning. The latter is particularly valuable in sequentially arriving signals, which require fast processing and efficient storage.

We conclude that the newly introduced product of a mathematically defined transform and an explicitly conditioned learnt factor following a transform-based prior enhances a controlled data adaptation backed by stability and computational efficiency.